\title{Non-orientable genus of knots in punctured Spin 4-manifolds}
\author{Kouki Sato}
\date{}
\newtheorem{problem}{Problem}
\newtheorem{thm}{Theorem}[section]
\newtheorem{prop}[thm]{Proposition}
\newtheorem{lem}[thm]{Lemma}
\newtheorem{claim}{Claim}
\theoremstyle{definition}
\newtheorem*{dfn}{Definition}
\newtheorem*{acknowledgements}{Acknowledgements}
\begin{document}

\maketitle

\begin{abstract}
For a closed 4-manifold $X$ and a knot $K$ 
in the boundary of punctured $X$, 
we define $\gamma_X^0(K)$ to be the smallest
first Betti number of 
non-orientable and null-homologous surfaces in punctured $X$
with boundary $K$.
Note that $\gamma^0_{S^4}$ is equal to
the non-orientable 4-ball genus and hence $\gamma^0_X$ is a generalization of
the non-orientable 4-ball genus.
While it is very likely that for given $X$, $\gamma^0_X$ has no upper bound,
it is difficult to show it.
In fact, even in the case of $\gamma^0_{S^4}$,
its non-boundedness  was shown for the first time by Batson in 2012.
In this paper, we prove that for any Spin 4-manifold $X$,
$\gamma^0_X$ has no upper bound.
\end{abstract}

\section{Introduction}
Throughout this paper, we work in the smooth category, all 4-manifolds
are orientable, oriented and simply-connected, and all surfaces are compact. 
If $X$ is a closed 4-manifold, punc$X$ denotes $X$ with an open 4-ball
deleted.

In \cite{s-t},
we defined the non-orientable genera of knots
in punc$X$ as follows.
\begin{dfn}
Let $X$ be a closed 4-manifold and
$K \subset \partial (\text{punc}X)\  (\cong S^3)$ a knot.
\textit{The non-orientable $X$-genus  $\gamma_X (K)$ of $K$}
is the smallest first Betti number of any non-orientable
surface $F \subset \text{punc}X$ with boundary $K$.
Moreover, we define $\gamma^0_{X}(K)$ 
to be 
the smallest first Betti number of any non-orientable
surface $F \subset \text{punc}X$ with boundary $K$
which  represents zero in 
$H_2 (\text{punc}X, \partial(\text{punc}X); \mathbb{Z}_2 )$.
\end{dfn}

We note that both $\gamma_{X}(K)$ and $\gamma^0_{X}(K)$
are generalizations
of the non-orientable 4-ball genus $\gamma_4(K)$,
which is the smallest first Betti number of any non-orientable
surface in $B^4$ with boundary $K$.
In this paper, we investigate the following problem.
\begin{problem}
\label{problem}
For a given 4-manifold $X$, do $\gamma_{X}$ and $\gamma^0_{X}$ have upper bounds?
\end{problem}
While $\gamma_4$ has been investigated since 1975 \cite{viro},
it is difficult to evaluate $\gamma_4$
and Problem \ref{problem} had remained open even in the case of $\gamma_4$
until recently.
The best reference for related studies is \cite{gilmer-livingston}.
In 2012, Batson \cite{batson} gave a negative answer of Problem \ref{problem}
in the case of $\gamma_4$
by using Heegaard Floer homology theory.
(In fact, Batson proved that for any positive integer $k$, there exists a knot $K$
which satisfies $\gamma_4(K)=k$.)
In 2014, Tange and the author \cite{s-t} applied Batson's idea to the case of 
$\gamma^0_{n\mathbb{C}P^2}$
and proved that $\gamma^0_{n\mathbb{C}P^2}$ also has no upper bound.
On the other hand,
Suzuki \cite{suzuki} and Norman \cite{norman}
proved that if $X$ is diffeomorphic to $S^2 \times S^2$
or $\mathbb{C}P^2 \# \overline{\mathbb{C}P^2}$, then
any knot bounds a disk in punc$X$.
This result implies that for many 4-manifolds
such as $(S^2 \times S^2) \# Y$ and  $\mathbb{C}P^2 \# \overline{\mathbb{C}P^2} \# Y$,
$\gamma_X$ is bounded above by $1$, where $Y$ denotes any 4-manifold.
Then, it seems natural to consider for a manifold $X$ with $\gamma_{X}$ bounded,
whether or not $\gamma^0_{X}$ also has upper bound. 
In this paper, we prove the following theorem
that gives a negative answer of this question.
\begin{thm}
\label{main thm}
If $X$ is a Spin $4$-manifold, then $\gamma^0_{X}$ has no upper bound.
\end{thm}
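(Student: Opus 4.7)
The plan is to extend to an arbitrary Spin $X$ the double-branched-cover/$d$-invariant strategy that Batson used for $B^4$ and that Tange and the author used for $n\mathbb{C}P^2$. Given any non-orientable $F \subset \mathrm{punc}(X)$ with $\partial F = K$ and $[F] = 0 \in H_2(\mathrm{punc}(X),\partial;\mathbb{Z}_2)$, form the double cover $\tilde W \to \mathrm{punc}(X)$ branched along $F$; the $\mathbb{Z}_2$-null-homology of $F$ is exactly what is needed for the cover to exist. Because $X$ is Spin, the pair $(\mathrm{punc}(X),F)$ is characteristic, and $\tilde W$ inherits a Spin structure $\tilde{\mathfrak{s}}$ restricting on $\partial\tilde W \cong \Sigma_2(S^3,K)$ to the canonical Spin structure $\mathfrak{s}_0$.

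Standard transfer formulas express the topology of $\tilde W$ in terms of $X$, $b_1(F)$, and the (integral) normal Euler number $e(F)$:
\[
\chi(\tilde W) = 2\chi(\mathrm{punc}(X)) - \chi(F), \qquad \sigma(\tilde W) = 2\sigma(X) - \tfrac{1}{2}e(F),
\]
and together with the Gordon--Litherland type bound $|e(F)| \le 2b_1(F) + C(K)$ these exhibit $\sigma(\tilde W)$ and $b_2^\pm(\tilde W)$ as affine functions of $b_1(F)$. Plugging $(\tilde W,\tilde{\mathfrak{s}})$ into an Ozsv\'ath--Szab\'o / Frøyshov type $d$-invariant inequality for Spin fillings then yields, after a short rearrangement, a linear lower bound
\[
b_1(F) \;\ge\; c\,\bigl|d(\Sigma_2(S^3,K),\mathfrak{s}_0)\bigr| - C'(X,K),
\]
with some universal constant $c>0$. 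The proof is completed by exhibiting a family of knots $K_n$ with $|d(\Sigma_2(S^3,K_n),\mathfrak{s}_0)|\to\infty$, for which Batson's torus-knot examples (whose double branched covers are Brieskorn spheres with explicit $d$-invariants) suffice, giving $\gamma^0_X(K_n)\to\infty$.

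The main obstacle I expect to encounter is controlling the $b_2^+$-correction in the $d$-invariant inequality. Because $F$ is non-orientable, $e(F)$ can have either sign, so $\sigma(\tilde W)$ is not automatically small, and the $b_2^+(\tilde W)$ contribution coming from the $b_1(F)$ new $2$-cycles introduced by the branched cover must be tracked finely so that the coefficient of $b_1(F)$ in the final bound remains strictly positive. A secondary nuisance is that $X$ itself may have $b_2^+(X)>0$ (as for $X=K3$ or $S^2\times S^2$), contributing a fixed but nontrivial term that must be absorbed into $C'(X,K)$; this is mostly bookkeeping, provided the Spin extension $\tilde{\mathfrak{s}}$ of $\mathfrak{s}_0$ is chosen carefully.
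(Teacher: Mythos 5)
Your reduction hinges on the claim that, because $X$ is Spin and $[F]=0$ in $H_2(\text{punc}X,\partial(\text{punc}X);\mathbb{Z}_2)$, the double cover $\tilde W$ of punc$X$ branched along $F$ inherits a Spin structure. That step fails: Spin-ness of a double branched cover is governed by the mod $2$ class of the branch surface upstairs (equivalently, by how $F$ lifts and by $e(F)$), not by $F$ being characteristic downstairs. Already for $X=S^4$, $K$ the unknot and $F$ a M\"obius band of normal Euler number $\pm2$ in $B^4$, the branched cover is a punctured $\mathbb{C}P^2$ or $\overline{\mathbb{C}P^2}$ (this is the classical description of $\mathbb{C}P^2$ as the double cover of $S^4$ branched over $\mathbb{R}P^2$), which is not Spin, even though $S^4$ is Spin and $F$ is $\mathbb{Z}_2$-null-homologous. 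Since $F$ ranges over \emph{all} surfaces computing $\gamma^0_X(K)$, you cannot arrange Spin-ness, and with it you lose both the Fr{\o}yshov/Manolescu-type inequalities for Spin fillings and the assertion that $\tilde{\mathfrak{s}}$ restricts to the canonical Spin structure on $\Sigma_2(S^3,K)$.

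Even granting a suitable structure on $\tilde W$, the inequality you want to apply does not exist in the generality you need. Ozsv\'ath--Szab\'o $d$-invariant bounds require negative (semi)definite fillings, i.e.\ $b_2^+(\tilde W)=0$, which is hopeless here: for $X=S^2\times S^2$ or $K3$ one has $b_2^+(\tilde W)\geq b_2^+(X)>0$ before $F$ even enters, and your own transfer formulas ($\sigma(\tilde W)=2\sigma(X)-e(F)/2$ together with $b_2(\tilde W)\approx 2b_2(X)+\beta_1(F)$) show that $b_2^+(\tilde W)$ can grow linearly in $\beta_1(F)$ when $e(F)$ is very negative. So the ``$b_2^+$-correction'' you flag as an expected nuisance is not bookkeeping; it is the missing theorem. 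This is exactly where the paper takes a different road: it never passes to a branched cover, but instead turns $N$ into an orientable characteristic surface in punc$(X\#(S^2\times S^2))$ (Yasuhara's Connecting Lemma plus Batson's Proposition 3), caps it with a characteristic disk in an auxiliary punctured $Y$ (for $T(2k+1,2k)$, a disk in punc$\overline{\mathbb{C}P^2}$ representing $(2k+1)\overline{\gamma}$), and then applies the closed-manifold $10/8$-type genus bound of Furuta--Lawson for characteristic surfaces, using Yasuhara's signature inequality to eliminate $e(N)$; Furuta's theorem is precisely the tool that handles indefinite intersection forms, where Heegaard Floer correction terms give no leverage. A smaller but real defect: your final bound $b_1(F)\geq c\,|d(\Sigma_2(S^3,K),\mathfrak{s}_0)|-C'(X,K)$ has a constant depending on $K$, so unboundedness does not follow from $|d(\Sigma_2(S^3,K_n),\mathfrak{s}_0)|\to\infty$ unless you also show $C'(X,K_n)$ grows strictly more slowly along your family.
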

By this theorem, it follows that for any Spin $4$-manifold $X$,
$\gamma_{(S^2 \times S^2) \# X}$ is bounded above by $1$
and $\gamma^0_{(S^2 \times S^2) \# X}$ has no upper bound.
Moreover, 
this theorem gives an alternate proof of non-boundedness of $\gamma_4$ 
which is obtained from Furuta's $10/8$-theorem \cite{furuta}
instead of Heegaard Floer homology theory. 

\begin{acknowledgements}
The author would like to thank Kokoro Tanaka and Akira Yasuhara 
for their useful comments and encouragement.
\end{acknowledgements}

\section{Construction of a lower bound for $\gamma^0_{X}$}

Let $Y$ be a closed 4-manifold.
For any knot 
$K \subset \partial(\text{punc}Y)$,
we define Char$(Y, K)$ a set of characteristic homology classes 
in $H_2(\text{punc}Y,\partial(\text{punc}Y); \mathbb{Z})$
which  are represented by disks in punc$Y$ with boundary $K$.
In this section, we construct a new lower bound of $\gamma^0_X$
for any Spin 4-manifold $X$ which consists of the knot signature, 
an element of Char$(Y,K)$  and invariants for $X$.

\begin{prop}
\label{lower bound}
Let $X$ be a closed Spin $4$-manifold and $K$ a knot in $\partial (\text{punc}X)$.
Then for any closed 4-manifold $Y$ and any element $\eta$ of Char$(Y,K)$, we have
\begin{eqnarray*}
3\gamma^0_X(K) &\geq& \left| \sigma(K) - \frac{\eta \cdot \eta}{2} + 
\frac{\sigma(X)+\sigma(Y)}{2} \right| \\
\ &\ & - 4\max(\beta_2^+(X)+\beta_2^-(Y), \beta_2^-(X)+\beta_2^+(Y))-\beta_2(X),
\end{eqnarray*}
where $\sigma(K)$ and $\sigma(X)$ is the signature of $K$ and $X$ respectively, 
$\eta \cdot \eta$ is the self-intersection number of $\eta$, and
$\beta^+_2$ (resp. $\beta^-_2$) is the rank of positive (resp. negative)
part of the intersection form of $X$.
\end{prop}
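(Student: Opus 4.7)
The plan starts by turning an abstract Spin 4-manifold problem into a closed 4-manifold one. Pick a non-orientable null-homologous surface $F \subset \text{punc}X$ with $\partial F = K$ and $b_1(F) = \gamma^0_X(K)$, together with a disk $D \subset \text{punc}Y$ with $\partial D = K$ representing $\eta$. Gluing $\text{punc}X$ and $\text{punc}Y$ along their boundary 3-spheres produces a closed 4-manifold $M \cong X \# Y$ containing the closed non-orientable surface $\Sigma := F \cup_K D$, with $b_1(\Sigma) = \gamma^0_X(K)$ and (because $F$ is $\mathbb{Z}_2$-null and $F \cap D = K$) intersection pairing value $\Sigma \cdot \Sigma = \eta \cdot \eta$. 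A Mayer--Vietoris calculation, using that $X$ is Spin (so every class in $H_2(X)$ has even self-intersection), that $F$ represents zero mod $2$, and that $\eta$ is characteristic in $\text{punc}Y$, shows $[\Sigma]$ is characteristic in $H_2(M;\mathbb{Z}_2)$.

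\textbf{Step 2 (From $\Sigma$ to a closed Spin 4-manifold).} The plan is then to convert $\Sigma$ into a closed Spin 4-manifold $\widetilde M$ whose signature and Betti numbers encode the quantities on the right-hand side of the proposition, via a double branched cover construction. On the $X$-side, since $F$ is $\mathbb{Z}_2$-null, the double cover $V_X$ of $\text{punc}X$ branched along $F$ exists and has boundary $\Sigma_2(K)$, the double cover of $S^3$ branched along $K$. On the $Y$-side, one uses the characteristic class $\eta$ together with auxiliary blow-ups to build a companion 4-manifold $V_Y$ with boundary $\Sigma_2(K)$. Gluing $V_X$ to $V_Y$ with an appropriate orientation convention yields $\widetilde M$, which is Spin thanks to the characteristic property of $\Sigma$ combined with the Spin condition on $X$. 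The standard $G$-signature formula for double branched covers, together with an Euler characteristic count, then gives
\[\sigma(\widetilde M) \;=\; 2\sigma(K) \;-\; \eta \cdot \eta \;+\; \sigma(X) \;+\; \sigma(Y),\]
and linear bounds $b_2^{\pm}(\widetilde M) \leq \beta_2^{\pm}(X) + \beta_2^{\mp}(Y) + c_1 \gamma^0_X(K) + c_2 \beta_2(X)$ with explicit constants, where the $\pm/\mp$ swap is an artefact of the orientation reversal on one side and the $\gamma^0_X(K)$ term absorbs the first Betti number of $F$.

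\textbf{Step 3 (Furuta's $10/8$ theorem, conclusion, and obstacle).} Since $\widetilde M$ is a closed simply-connected Spin 4-manifold, Furuta's theorem applies: $b_2^+(\widetilde M) \geq |\sigma(\widetilde M)|/8 + 1$ when $\sigma(\widetilde M) \leq 0$, and the symmetric inequality when $\sigma(\widetilde M) \geq 0$. Plugging in the formulas from Step~2 and rearranging yields the claimed inequality; the two sign cases of $\sigma(\widetilde M)$ produce the two arguments of the $\max$, while the coefficients $3$ and $4$ come from combining the Furuta factor $10/8$ with the linear $b_2^{\pm}$-bounds. The main obstacle is the construction in Step~2: when $Y$ is not Spin, building $V_Y$ so that the glued manifold $\widetilde M$ is Spin requires exploiting $\eta$'s characteristic property through a carefully chosen sequence of blow-ups, and the $G$-signature contributions of the non-orientable piece $F$ (producing the $\sigma(K)$ term) and of the characteristic disk $D$ (producing the $\eta \cdot \eta$ term) must be tracked with exactly the precision needed to match the constants $3$, $4$, and $1$ in the final inequality.
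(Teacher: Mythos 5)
Your Step 2 is where the argument breaks, and it breaks in two independent ways. First, on the $Y$-side the double branched cover you need does not exist: $\eta$ is characteristic, so its mod-2 reduction is Poincar\'e dual to $w_2$, which is nonzero whenever $Y$ is non-Spin (exactly the case of interest, e.g.\ $Y=\overline{\mathbb{C}P^2}$ in Section 3); a class that is nonzero in $\mathbb{Z}_2$-homology cannot be a branch locus for a double cover, and blow-ups cannot repair this, since the modified surface remains characteristic and the blown-up manifold remains non-Spin, so it never becomes mod-2 null. Second, even on the $X$-side, where the cover of punc$X$ branched along the mod-2 null surface $F$ does exist, a double cover branched along a \emph{non-orientable} surface is generally not Spin: the double cover of $S^4$ branched along the standard $\mathbb{R}P^2$ is $\pm\mathbb{C}P^2$. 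So the claim that $\widetilde M$ is Spin fails, Furuta's theorem does not apply to it, and Step 3 cannot run. There is also a problem already in Step 1: for the closed non-orientable surface $\Sigma=F\cup D$ the relevant numerical invariant is the normal Euler number, which is $e(F)$ plus the contribution of $D$, and $e(F)$ is \emph{not} determined by $F$ being $\mathbb{Z}_2$-null; your assertion $\Sigma\cdot\Sigma=\eta\cdot\eta$ simply drops this unknown quantity. Finally, the constants $3$, $4$, $1$ are asserted rather than derived.

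For contrast, the paper never takes a branched cover at this stage and treats $e(N)$ as the central unknown. It first converts the non-orientable surface $N$ into an \emph{orientable} characteristic surface in punc$(X\#(S^2\times S^2))$ using Yasuhara's Connecting Lemma II and Batson's Proposition 3, caps it off with the disk representing $\eta$ in $-\text{punc}Y$, and applies the Lawson--Furuta minimal genus bound for closed orientable characteristic surfaces; this yields Lemma \ref{non-orientable}, a lower bound for $\beta_1(N)$ in terms of $|e(N)-\eta\cdot\eta-\sigma(X)+\sigma(Y)|$. The knot signature enters not through a $G$-signature computation of your $\widetilde M$ but through Yasuhara's inequality $|\sigma(K)+\sigma(X)-e(N)/2|\leq\beta_2(X)+\beta_1(N)$ (Theorem \ref{yasuhara thm}), which also accounts for the $-\beta_2(X)$ term. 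Adding the two inequalities and using the triangle inequality eliminates $e(N)$ and produces the coefficient $3$. If you want to salvage a branched-cover proof, you would in effect be reproving Yasuhara's theorem, and you would still need a separate mechanism (like the Connecting Lemma step) to reach a Spin manifold to which Furuta's theorem applies.
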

In order to prove Proposition \ref{lower bound}, 
we need the following lemma.

\begin{lem}
\label{non-orientable}
Let $X$ be a closed $4$-manifold,
$K$ a knot in the boundary of punc$X$,
and $N$ a non-orientable surface in punc$X$ with $\partial N = K$
which represents a characteristic homology class in 
$H_2(\text{punc}X, \partial(\text{punc}X); \mathbb{Z}_2)$.
Then for any closed 4-manifold $Y$ and any element $\eta$ of Char$(Y,K)$, we have
$$
\beta_1(N) \geq \frac{| e(N) - \eta \cdot \eta - \sigma(X)+ \sigma(Y) |}{4} - 
2\max(\beta_2^+(X)+\beta_2^-(Y), \beta_2^-(X)+\beta_2^+(Y)).
$$
\end{lem}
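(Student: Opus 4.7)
The plan is to reduce the statement to a bound on the first Betti number of a closed non-orientable characteristic surface inside a closed $4$-manifold, where a Rokhlin-type inequality is available. Since $\eta\in\text{Char}(Y,K)$, I choose an embedded disk $D\subset\text{punc}Y$ with $\partial D=K$ and $[D]=\eta$ in $H_2(\text{punc}Y,\partial(\text{punc}Y);\mathbb{Z})$. I glue $\text{punc}X$ and $\text{punc}Y$ along their common boundary $3$-sphere via an orientation-reversing diffeomorphism that identifies the two copies of $K$. The resulting closed oriented $4$-manifold is $Z:=X\#\overline{Y}$, and the two surfaces assemble into a closed non-orientable surface $F:=N\cup_K D\subset Z$.

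Next I would read off the numerical invariants of this construction. Orientation reversal gives $\sigma(Z)=\sigma(X)-\sigma(Y)$, $\beta_2^+(Z)=\beta_2^+(X)+\beta_2^-(Y)$, and $\beta_2^-(Z)=\beta_2^-(X)+\beta_2^+(Y)$. Capping a surface that has a single boundary circle by a disk preserves the first Betti number, so $\beta_1(F)=\beta_1(N)$. The normal bundles of $N$ and $D$ agree along $K$, so their Euler classes add, and reversing the orientation of $Y$ flips the self-intersection pairing; this yields $e(F)=e(N)-\eta\cdot\eta$. Finally, the class $[F]$ is characteristic in $H_2(Z;\mathbb{Z}_2)$: under the connected-sum decomposition $H_2(Z;\mathbb{Z}_2)\cong H_2(X;\mathbb{Z}_2)\oplus H_2(\overline{Y};\mathbb{Z}_2)$, the class $[F]$ decomposes as the closed-up classes of $N$ and $D$, both of which are characteristic mod $2$ by hypothesis ($N$ by assumption, and $\eta$ because $\eta\in\text{Char}(Y,K)$ is integrally characteristic and hence so mod $2$).

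With these ingredients in hand, I would apply the known Kervaire-Milnor/Gilmer-type inequality for a closed non-orientable characteristic surface in a closed oriented simply-connected $4$-manifold, namely
\[
\beta_1(F)\ \geq\ \frac{|e(F)-\sigma(Z)|}{4}\ -\ 2\max(\beta_2^+(Z),\beta_2^-(Z)),
\]
and substitute the expressions from the previous paragraph to obtain exactly the claimed bound. The main obstacles are twofold: first, the careful accounting of sign conventions for the normal Euler number of the non-orientable surface $F$ (which lives in cohomology with twisted $\mathbb{Z}$-coefficients, so the additivity $e(F)=e(N)+e_{\overline{Y}}(D)$ must be justified with orientation bookkeeping), and second, the invocation of the non-orientable characteristic surface inequality itself, which must be located in or derived along the lines of the literature cited in the introduction, and which can be established via a Furuta $10/8$-type argument applied to an auxiliary Spin cover branched along an orientable resolution of $F$.
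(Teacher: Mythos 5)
Your gluing setup is essentially the paper's: take a disk $D\subset\text{punc}Y$ representing $\eta$, glue along $(S^3,K)$ with reversed orientation, and track $\sigma$, $\beta_2^{\pm}$, the normal Euler number, and the characteristic condition under the connected sum. That bookkeeping is fine. The problem is the step you lean on to finish: the inequality
\[
\beta_1(F)\ \geq\ \frac{|e(F)-\sigma(Z)|}{4}\ -\ 2\max\bigl(\beta_2^+(Z),\beta_2^-(Z)\bigr)
\]
for a \emph{closed non-orientable} characteristic surface is not an off-the-shelf theorem you can cite; within the literature this paper works from (Yasuhara, Furuta, Lawson), the available characteristic-surface genus bound is for \emph{orientable} surfaces (Lawson's combination of Yasuhara's Theorem 1.2 with Furuta's $10/8$-theorem). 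Your proposed derivation of the non-orientable version, ``a Furuta $10/8$-type argument applied to an auxiliary Spin cover branched along an orientable resolution of $F$,'' is exactly where the content of the lemma lives, and as stated it does not go through: there is no integral class or suitable Spin branched cover attached directly to a non-orientable surface, so one must first make the surface orientable, and that is not free.

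The paper does this conversion before gluing: when $\beta_1(N)$ is odd, Yasuhara's Connecting Lemma II together with Batson's Proposition 3 replaces $N$ by an orientable surface $F$ in $\text{punc}(X\#(S^2\times S^2))$ with $\beta_1(F)=\beta_1(N)-1$, $e(F)=e(N)\pm 2$, still characteristic; only then is $F\cup(-D)$ closed up inside $M=X\#(S^2\times S^2)\#(-Y)$ and Lawson's orientable bound applied (with a separate mod $16$ case handled by summing with a torus of self-intersection $\pm 8$). The stabilization by $S^2\times S^2$ shifts $\max(\beta_2^+,\beta_2^-)$ by $1$, the $\pm 2$ shift in $e$ and the drop in $\beta_1$ must be absorbed into the constants, and the case $\beta_1(N)$ even needs a further trick (connect-summing $N$ with $\mathbb{R}P^2\subset S^4$ to reach odd $\beta_1$). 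None of this appears in your proposal, so the constants in your target inequality are asserted rather than obtained. In short: your reduction to a closed characteristic surface is the right frame, but the lemma's real work --- the crosscap resolution and the careful application of the orientable Furuta-type bound, with the attendant parity and stabilization costs --- is missing.
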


\begin{proof}
We first prove the lemma in the case when $\beta_1(N)$ is odd.
In this case, by applying the arguments of  
 \cite[Connecting Lemma II]{yasuhara} 
and \cite[Proposition 3]{batson},
we have a new orientable surface $F$ in punc$(X \# (S^2 \times S^2))$
with boundary $K$ which satisfies the following:
\begin{enumerate}
\item $\beta_1(F) = \beta_1(N) - 1$,
\item $e(F) = e(N) + 2\varepsilon$ for some $\varepsilon = \pm1 $, and
\item $F$ represents a characteristic homology class .

\end{enumerate}
On the other hand, by the definition of Char$(Y,K)$,
there exists a disk $D$ in punc$Y$ with boundary
$K$ which represents $\eta$.
Hence by gluing $(\text{punc}(X \# (S^2 \times S^2)),F)$ 
and $(-\text{punc}Y,-D)$ along $(S^3,K)$, we
obtain the pair $(M,\Sigma)$ = $(X\# (S^2 \times S^2) \# (-Y),F \cup (-D))$
of a closed 4-manifold and an embedded,
closed, orientable surface.
Furthermore, a new orientable surface $\Sigma$ represents
a characteristic homology class $[F,\partial F]\oplus [-D, -\partial D]$ in 
$H_2(M; \mathbb{Z})$.
\begin{claim}
\label{minimal genus}
The pair $(M,\Sigma)$
satisfies the following inequality
$$
g(\Sigma) \geq \frac{|[\Sigma] \cdot [\Sigma] -
\sigma(M)|}{8}-\max(\beta^+_2(M),\beta^-_2(M))+1.
$$
\end{claim}

\def\proofname{Proof of Claim 1}
\begin{proof}
Lawson suggested that by
combining \cite[Theorem 1.2]{yasuhara}
and Furuta's theorem \cite[Theorem 1]{furuta},
we have
$$
g(\Sigma) \geq \frac{|[\Sigma] \cdot [\Sigma] -
\sigma(M)|}{8}-\max(\beta^+_2(M),\beta^-_2(M))+2
$$
if $[\Sigma] \cdot [\Sigma] \equiv \sigma(M)$ (mod 16) \cite[Theorem 36]{lawson}.
If $[\Sigma] \cdot [\Sigma] \equiv \sigma(M)+8$ (mod 16),
for any $\varepsilon'=\pm1$ by taking a connected sum with
a torus $T_{\varepsilon'}$ in $S^2 \times S^2$ with 
$[T_{\varepsilon'}]\cdot[T_{\varepsilon'}] =8\varepsilon'$ and
applying the inequality above, we have
$$
g(\Sigma) \geq \frac{|[\Sigma] \cdot [\Sigma] -
\sigma(M)|}{8}-\max(\beta^+_2(M),\beta^-_2(M))+1.
$$
\end{proof}
By Claim \ref{minimal genus},
we have the following inequality
$$
g(\Sigma) \geq \frac{|[F, \partial F] \cdot[F, \partial F] - \eta \cdot \eta - \sigma(M)|}{8}-\max(\beta^+_2(M),\beta^-_2(M))+1.
$$
Since $2g(\Sigma) = \beta_1(N)-1$, $\sigma(M)=\sigma(X)-\sigma(Y)$,
$[F, \partial F] \cdot[F, \partial F] = e(N)+ 2\varepsilon$ and $\beta_2^{\pm}(M)=\beta_2^{\pm}(X)+\beta_2^{\mp}(Y)+1$,
this inequality implies
$$
\beta_1(N) \geq \frac{|e(N)- \eta \cdot \eta - \sigma(X)+\sigma(Y)|}{4}
-2\max(\beta_2^+(X)+\beta_2^-(Y), \beta_2^-(X)+\beta_2^+(Y))+\frac{1}{2}.
$$
In the case when $\beta_1(N)$ is even, 
taking the connected sum of $N \subset X$ with the standard
embedding of $\mathbb{R}P^2 \subset S^4$ or its mirror image, 
we have a non-orientable surface $N'$ in $X$ with boundary $K$
such that $\beta_1(N')=\beta_1(N)+1$ and $e(N')= e(N)+ 2\varepsilon''$ for
any $\varepsilon'' = \pm1$.
Since $\beta_1(N')$ is odd,  we have
\begin{eqnarray*}
\beta_1(N) + 1 &\geq &\frac{|e(N)- \eta \cdot \eta - \sigma(X)+\sigma(Y)|}{4}\\
\ &\ &-2\max(\beta_2^+(X)+\beta_2^-(Y), \beta_2^-(X)+\beta_2^+(Y)) +1
\end{eqnarray*}
for a favorable value of $\varepsilon''$.
This completes the proof.
\end{proof}

In order to prove the proposition, we use the following theorem.
\begin{thm}[Yasuhara, \cite{yasuhara}]
\label{yasuhara thm}
Let $X$ be a closed $4$-manifold and
$K \subset \partial (\text{punc}X)$ a knot.
If $K$ bounds a non-orientable surface $N$ in punc$X$
that represents zero in $H_2 (\text{punc}X, \partial (\text{p-}$\\
$\text{unc}X); \mathbb{Z}_2 ) $, then
\[
\left| \sigma (K) + \sigma (X) -  \frac{e(N)}{2} \right| \leq \beta_2 ( X ) + \beta_1 ( N ).
\]
\end{thm}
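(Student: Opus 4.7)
The plan is to take a non-orientable surface $N$ in punc$X$ with $\partial N = K$ that realizes the minimum $\beta_1(N) = \gamma^0_X(K)$ and is null-homologous in $H_2(\text{punc}X, \partial(\text{punc}X); \mathbb{Z}_2)$, and then squeeze it from two sides: once via Lemma \ref{non-orientable} (which introduces the auxiliary pair $(Y,\eta)$ and the quantity $\eta \cdot \eta$) and once via Yasuhara's Theorem \ref{yasuhara thm} (which introduces $\sigma(K)$). Both inequalities will contain $e(N)$, and the combination step is to eliminate $e(N)$ using the triangle inequality.

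First I would verify that the Spin hypothesis on $X$ allows Lemma \ref{non-orientable} to be applied to $N$. Since $X$ is Spin, its intersection form is even, so the zero class is characteristic in $H_2(\text{punc}X, \partial(\text{punc}X); \mathbb{Z}_2)$; hence a surface representing zero in this group is in particular characteristic, which is exactly the hypothesis of Lemma \ref{non-orientable}. Theorem \ref{yasuhara thm} applies directly.

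Then I would record the two resulting inequalities. Set $M = \max(\beta_2^+(X)+\beta_2^-(Y),\, \beta_2^-(X)+\beta_2^+(Y))$. Lemma \ref{non-orientable} gives, after clearing the denominator,
\begin{equation*}
4\beta_1(N) + 8M \;\geq\; \bigl|\,e(N) - \eta\cdot\eta - \sigma(X) + \sigma(Y)\,\bigr|,
\end{equation*}
and Theorem \ref{yasuhara thm}, after doubling, gives
\begin{equation*}
2\beta_1(N) + 2\beta_2(X) \;\geq\; \bigl|\,2\sigma(K) + 2\sigma(X) - e(N)\,\bigr|.
\end{equation*}

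The key combination step is now to apply $|A| + |B| \geq |A + B|$ to the right-hand sides with $A = e(N) - \eta\cdot\eta - \sigma(X) + \sigma(Y)$ and $B = 2\sigma(K) + 2\sigma(X) - e(N)$. The variable $e(N)$ cancels in $A + B$, leaving $A + B = 2\sigma(K) + \sigma(X) + \sigma(Y) - \eta\cdot\eta$. Adding the two inequalities and dividing by two then yields
\begin{equation*}
3\beta_1(N) + 4M + \beta_2(X) \;\geq\; \left|\,\sigma(K) - \tfrac{\eta\cdot\eta}{2} + \tfrac{\sigma(X) + \sigma(Y)}{2}\,\right|,
\end{equation*}
which is exactly the desired inequality once I use $\beta_1(N) = \gamma^0_X(K)$ and rearrange. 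The main (but modest) obstacle is just bookkeeping: choosing signs consistently so that $A$ and $B$ line up for the triangle inequality, and remembering that the Spin condition on $X$ is what legitimizes the step invoking Lemma \ref{non-orientable} on a null-homologous (rather than merely characteristic) surface.
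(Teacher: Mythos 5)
Your proposal does not prove the statement in question. The statement is Theorem \ref{yasuhara thm}, which is Yasuhara's signature inequality
$\left| \sigma(K) + \sigma(X) - e(N)/2 \right| \leq \beta_2(X) + \beta_1(N)$,
an external result quoted from \cite{yasuhara}; the paper gives no proof of it precisely because it is cited. What you have written is instead a proof of Proposition \ref{lower bound}: you take a surface realizing $\gamma^0_X(K)$, apply Lemma \ref{non-orientable} and Theorem \ref{yasuhara thm}, and eliminate $e(N)$ via the triangle inequality $|A| + |B| \geq |A+B|$. That argument is fine as far as it goes --- indeed it matches the paper's own proof of Proposition \ref{lower bound} --- but as an argument for Theorem \ref{yasuhara thm} it is circular, since Theorem \ref{yasuhara thm} itself is one of your two inputs. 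The hypotheses also do not line up: Theorem \ref{yasuhara thm} makes no Spin assumption on $X$ and does not mention any auxiliary pair $(Y,\eta)$, whereas your argument requires $X$ Spin to legitimize Lemma \ref{non-orientable} and produces an inequality involving $\eta \cdot \eta$, $\sigma(Y)$, and $\beta_2^{\pm}(Y)$, which is a different statement.

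A genuine proof of Theorem \ref{yasuhara thm} cannot be assembled from the other results in this paper; it needs Yasuhara's actual techniques from \cite{yasuhara}. The standard route is roughly as follows: since $N$ represents zero in $H_2(\text{punc}X, \partial(\text{punc}X); \mathbb{Z}_2)$, there is a double cover of $\text{punc}X$ branched along $N$; one then applies the $G$-signature theorem to compute the signature of this branched cover, which is where the normal Euler number correction $e(N)/2$ enters (for an orientable branch surface this term would vanish, recovering the classical bound $|\sigma(K)+\sigma(X)| \leq \beta_2(X) + 2g$), and finally one bounds the relevant Betti numbers of the cover in terms of $\beta_2(X)$ and $\beta_1(N)$ to obtain the stated inequality. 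None of these ingredients --- the branched cover construction, the $G$-signature computation, or the Betti number estimate --- appears in your proposal, so the key ideas of the proof are entirely missing.
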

\def\proofname{Proof of Proposition \ref{lower bound}}
\begin{proof}
Since we assume that $X$ is a Spin 4-manifold,
a non-orientable surface $N$ in punc$X$ represents a characteristic
homology class in 
$H_2(\text{punc}$ $X, \partial(\text{punc}X); \mathbb{Z}_2)$
if and only if $N$ represents zero-element.
Hence if $N$ satisfies the assumption of $\gamma_X^0(K)$,
then the pair $(X,N)$ satisfies both Lemma \ref{non-orientable} and
Theorem \ref{yasuhara thm}.
By combining these two inequality and deleting $e(N)$,
we can obtain the inequality of Proposition \ref{lower bound}.
\end{proof}

\section{Proof of Theorem \ref{main thm}}
We can calculate 
characteristic homology classes for certain knots
by Kirby calculus.
In this section,
we give an element of Char$(\overline{\mathbb{C}P^2},T(2k+1,2k))$ ($k>0$)
where $T(2k+1, 2k)$ denotes a $(2k+1, 2k)$-torus knot,
and prove Theorem \ref{main thm}.
The proof of Theorem \ref{main thm}
implies that for any Spin 4-manifold $X$,
the sequence $\{ \gamma^0_X(T(2k+1,2k)) \}$
always diverges.
Let $\overline{\gamma}$ be a standard generator of 
$
H_2 (\text{punc}\overline{\mathbb{C}P^2}
, \partial(\text{punc}\overline{\mathbb{C}P^2})
;\mathbb{Z})
$
such that
$\overline{\gamma} \cdot \overline{\gamma} = -1$.

\begin{prop}
$(2k+1)\overline{\gamma}$ is an element of Char$(\overline{\mathbb{C}P^2},T(2k+1,2k))$.
\end{prop}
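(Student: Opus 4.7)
The plan is to prove the proposition by exhibiting a smoothly embedded disk $D \subset \text{punc}\overline{\mathbb{C}P^2}$ with $\partial D = T(2k+1,2k)$ and $[D] = (2k+1)\overline{\gamma}$, and then checking that this class is characteristic. The characteristicness is purely arithmetic: $H_2(\text{punc}\overline{\mathbb{C}P^2}, \partial(\text{punc}\overline{\mathbb{C}P^2});\mathbb{Z}) = \mathbb{Z}\overline{\gamma}$ with intersection form $\langle -1\rangle$, so $n\overline{\gamma}$ is characteristic exactly when $n$ is odd, and $2k+1$ is odd.

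For the disk, my approach would be to use the classical rational unicuspidal plane curve realizing $T(2k+1,2k)$ as the link of its singularity. Explicitly, let
\[
C_k := \{[X:Y:Z] \in \mathbb{C}P^2 \mid Z^{2k+1} = X^{2k} Y\} \subset \mathbb{C}P^2,
\]
which one checks by direct computation is an irreducible plane curve of degree $2k+1$, parametrized by $[s:t] \mapsto [s^{2k+1}:t^{2k+1}:s^{2k}t]$, and whose only singular point is $[0:1:0]$. In the affine chart $Y=1$ the local equation is $Z^{2k+1} = X^{2k}$, a Brieskorn singularity whose link in a small $3$-sphere is $T(2k+1,2k)$; because this cusp is unibranched, the normalization map $\mathbb{C}P^1 \to C_k$ is a homeomorphism, so $C_k$ is topologically an $S^2$.

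Removing a small Milnor $4$-ball $B^4_\epsilon$ around the cusp then yields a smoothly embedded disk $D_k = C_k \setminus B^4_\epsilon \subset \text{punc}\mathbb{C}P^2$ with boundary $T(2k+1,2k)$. Since $[C_k] = (2k+1)[H]$ in $H_2(\mathbb{C}P^2;\mathbb{Z})$, the class of $D_k$ in $H_2(\text{punc}\mathbb{C}P^2, \partial;\mathbb{Z})$ is $(2k+1)$ times the positive generator. Now reversing the orientation of the ambient 4-manifold replaces $\mathbb{C}P^2$ by $\overline{\mathbb{C}P^2}$, turns the positive generator into $\overline{\gamma}$, gives $D_k$ the class $(2k+1)\overline{\gamma}$, and produces the boundary $T(2k+1,2k)$ in the paper's chirality convention.

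Equivalently, and more in the spirit of the section's opening remark, one can recast the whole argument in Kirby calculus: a sequence of Rolfsen twists realizing the iterated blow-ups which minimally resolve the $(2k+1,2k)$-cusp transforms the standard $(-1)$-framed unknot diagram of $\overline{\mathbb{C}P^2}$ into the attaching data $(T(2k+1,2k),\,-(2k+1)^2)$, and the core of the new 2-handle is precisely the disk $D$, whose framing $-(2k+1)^2$ is the required self-intersection $(2k+1)\overline{\gamma} \cdot (2k+1)\overline{\gamma}$. The main obstacle is essentially bookkeeping: exhibiting the curve $C_k$ is immediate from the closed-form equation, but the orientation subtlety (the algebraic construction naturally lives in $\mathbb{C}P^2$, and passing to $\overline{\mathbb{C}P^2}$ mirrors the boundary knot) must be reconciled with the paper's precise definition of $T(2k+1,2k)$.
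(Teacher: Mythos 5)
Your ingredients are all fine as far as they go: the cuspidal curve $C_k$, the Milnor-ball excision producing a smooth disk of class $(2k+1)$ times the generator, and the remark that an odd multiple of the generator of the $\langle -1\rangle$-form is automatically characteristic. The genuine gap is exactly the point you flag in your last sentence and never resolve: the chirality of the boundary knot. Taken literally, your chain of assertions is inconsistent: you first state that $\partial D_k\subset\partial(\text{punc}\mathbb{C}P^2)$ is $T(2k+1,2k)$ (the cusp link, i.e.\ the positive torus knot), and then that reversing the ambient orientation ``mirrors the boundary knot'' and yet ``produces the boundary $T(2k+1,2k)$ in the paper's chirality convention''; both cannot hold with one fixed meaning of $T(2k+1,2k)$. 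This is not a cosmetic issue. The paper's $T(2k+1,2k)$ is the knot with $\sigma(T(2k+1,2k))=-2k^2$ (the positive torus knot), and that value is what is plugged into Proposition 2.1 in the proof of Theorem 1.1; if your construction actually delivered the mirror (with $\sigma=+2k^2$), the Proposition as used there would not be established. What is missing is the observation that the orientation gets reversed \emph{twice}: the cusp link is the positive torus knot in the Milnor sphere $\partial B_\epsilon$ oriented as the boundary of the ball, whereas $\partial(\text{punc}\mathbb{C}P^2)$ carries the opposite orientation (its outward normal points into $B_\epsilon$), so under the identification of $\partial(\text{punc}X)$ with the standard $S^3$ that the paper uses throughout (the one compatible with Theorem 2.3 and with gluing $\text{punc}X$ to $-\text{punc}Y$ along $(S^3,K)$), the boundary of $D_k$ in $\text{punc}\mathbb{C}P^2$ is already the \emph{negative} torus knot; only after the second reversal, passing to $\overline{\mathbb{C}P^2}$, do you recover the positive $T(2k+1,2k)$ bounding a disk of class $(2k+1)\overline{\gamma}$. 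Until this double flip is spelled out and checked against the convention in which the signature formulas are applied, the proof is incomplete at its only delicate point.

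Your closing Kirby-calculus remark, made precise, is the cleaner way to nail the chirality and is essentially the paper's own argument (Figure 1 is a motion picture of such a disk): present $\text{punc}\overline{\mathbb{C}P^2}$ as $B^4$ with a $(-1)$-framed $2$-handle attached along an unknot $U$; let $O$ be the unknot given as the closure of the braid $(\sigma_1\cdots\sigma_{2k})^{-1}$ on $2k+1$ strands, positioned so that $U$ encircles the $2k+1$ coherently oriented strands. Blowing down the $(-1)$-framed $U$ inserts a \emph{positive} full twist, turning $O$ into the closure of $(\sigma_1\cdots\sigma_{2k})^{2k}$, which is the positive $T(2k+1,2k)$, while $O$ bounds a pushed-in disk in the $0$-handle whose relative class is $\pm(2k+1)\overline{\gamma}$, as one sees from its linking number $2k+1$ with $U$. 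This fixes the chirality unambiguously, in agreement with $\sigma(T(2k+1,2k))=-2k^2$, and avoids the $\mathbb{C}P^2$-versus-$\overline{\mathbb{C}P^2}$ detour altogether.
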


\def\proofname{Proof}
\begin{proof}
The motion picture in Figure \ref{torus knot}
gives a disk $D$ in punc$\overline{\mathbb{C}P^2}$
with boundary $T(2k+1,2k)$ which represents $(2k+1)\overline{\gamma}$.
\end{proof}

\begin{figure}
\begin{center}
%この絵は若干の修正が必要！
\includegraphics[clip,width=12cm]{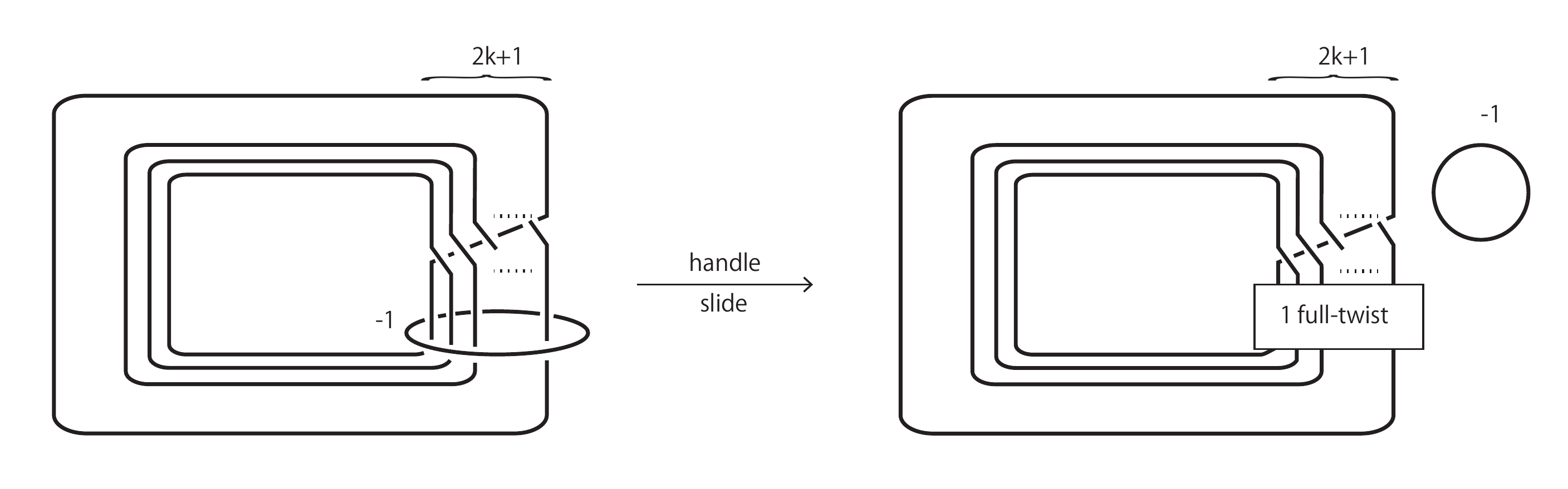}\\
\caption{}
\label{torus knot}
\end{center}
\end{figure}

\def\proofname{Proof of Theorem \ref{main thm}}
\begin{proof}
Since signatures of torus knots satisfy a recursion relation in \cite[Chapter 7]{m-k},
it is easy to check that $\sigma(T(2k+1,2k))= -2k^2$.
By applying Proposition \ref{lower bound} to the pair 
$(X,T(2k+1,2k), (2k+1)\overline{\gamma})$, we have the following inequality
$$
3\gamma^0_X(T(2k+1,2k)) \geq 
\left|2k + \frac{\sigma(X)}{2} \right| 
- 4\max(\beta^+_2(X)+1,\beta^-_2(X))-\beta_2(X),
$$
and the right side of the above inequality
infinitely diverges
when $k$ is limitlessly increased.
This completes the proof.
\end{proof}

\end{document}